\numberwithin{equation}{section}
\DeclareMathOperator{\sgn}{sgn}
\DeclareMathOperator{\Real}{Re}
\DeclareMathOperator{\Imag}{Im}
\DeclareMathOperator{\Arg}{Arg}
\newcommand\loc{_{\mathrm{loc}}}
\newcommand{\sobHh}{\dot{{{H}}}^1_{\al}}
\newcommand\HH{\sobHh}
\newcommand\HHo{{\dot{H}^1_{\al,0}}}
\newcommand{\La}{{L}^2_{\al}}
\numberwithin{equation}{section}
\newtheorem{thm}{Theorem}[section]
\newtheorem{proposition}[thm]{Proposition}
\newtheorem{corollary}[thm]{Corollary}
\newtheorem{lemma}[thm]{Lemma}
\theoremstyle{remark}
\newtheorem{remark}[thm]{Remark}
\theoremstyle{definition}
\def\Hat{\widehat{\phantom{a}}}
\def\th{^{\text{th}}}
\newcommand{\al}{\alpha}
\newcommand{\be}{\beta}
\newcommand{\ep}{\epsilon}
\newcommand{\si}{\sigma}
\newcommand{\te}{\theta}
\newcommand{\vp}{\varphi}
\newcommand{\De}{\Delta}
\newcommand{\Ga}{\Gamma}
\def\RR{\mathbb{R}}
\def\NN{\mathbb{N}}
\renewcommand\leq\leqslant
\renewcommand\geq\geqslant
\newcommand\pd\partial
\begin{document}
\title[Fractional wave operators via Dirichlet-to-Neumann
maps]{Fractional powers of the wave operator\\ via Dirichlet-to-Neumann
  maps\\ in anti-de Sitter spaces}
\author{Alberto Enciso, Mar\'ia del Mar Gonz\'alez and Bruno Vergara}
\maketitle

\begin{abstract}
We show that the fractional wave operator, which is usually studied in the
context of hypersingular integrals but had not yet appeared in
mathematical physics, can be constructed as the Dirichlet-to-Neumann
map associated with the Klein--Gordon equation in anti-de Sitter
spacetimes. Several generalizations of this relation will be discussed too.
\end{abstract}

\section{Introduction}

It is a classical result in potential theory that the
Dirichlet-to-Neumann map of the harmonic extension problem in the
upper half-space is given by the square root of the Laplacian. This
relation can be generalized to encompass all fractional powers of the
Laplacian, and has recently made a major impact in the theory of
nonlocal elliptic equations~\cite{Caffarelli}.

Indeed, for any $\al\in(0,1)$ this relation connects the multiplier
\begin{equation}\label{Dega1}
[(-\Delta)^\al f]\Hat(\xi):=|\xi|^{2\al}\widehat f(\xi)
\end{equation}
on $\RR^n$, which is a nonlocal operator of the form
\begin{equation}\label{Dega2}
(-\Delta)^\al f(x)=c_{n,\al}\int_{\RR^n}\frac{f(x)-f(x')}{|x-x'|^{n+2\al}}\, dx'\,,
\end{equation}
with a local elliptic equation in $n+1$ variables. Specifically, given a function $f$ on $\RR^n$ let us consider the function $u$ on $\RR^n\times\RR_+$ that solves the boundary value problem
\begin{gather}
\De_x u + \partial_{yy} u+\frac{1-2\al}y \partial_y u=0\quad \text{in }\RR^n\times\RR_+\,,\label{elliptic}\\
u(x,0)=f(x)\,,\qquad \lim_{y\to\infty}f(x,y)=0\,;\notag
\end{gather}
it was shown in~\cite{Caffarelli} that
\[
(-\Delta)^\al f(x)=c_{\al}\lim_{y\to0^+}y^{1-2\al}\partial_y u(x,y)\,.
\]
This reduces to the ordinary derivative $\partial_y u(x,0)$ in the case of the square root of the Laplacian ($\al=\frac12$), and in fact for all values of~$\al$ it provides the natural Neumann datum associated with the elliptic operator~\eqref{elliptic}.
Note that a generalized formula for higher powers $\alpha\in(0,\frac{n}{2})$ has been established in \cite{Mar,Chang-Yang}.

Our first objective in this paper is to derive an analogous relation
for fractional wave operators, by which we will always mean a fractional power of
the usual wave operator $\square:=\pd_{tt}-\De$ and not an evolution
equation driven by a fractional power of the
Laplacian or, more generally, a generator of a suitable semigroup (for
fine information on the latter in various contexts, cf.~\cite{Kemppainen-Sjogren-Torrea,Stinga-Torrea,Stinga-Torrea:heat}). Note that this cannot be seen as an analytic continuation of the elliptic case and that in fact several nontrivial choices need to be made, starting with the very definition of the fractional wave operator. This is due to the fact that the symbol of the wave operator, $|\xi|^2-\tau^2$, is not positive definite, so one cannot immediately define $\Box^\al$ through this quantity to the power of~$\al$ as in~\eqref{Dega1}, and can also be seen in the integral formula~\eqref{Dega2}, since formally replacing the squared Euclidean distance $|x-x'|^2$ by its Minkowskian counterpart
\[
|x-x'|^2-(t-t')^2
\]
in the denominator leads to an integral that is too singular to be
well defined. The analytic difficulties associated to dealing with
fractional powers of hyperbolic operators will become apparent already
in Section~\ref{section:powers}.

There is a considerable body of classical work on
fractional wave operators, which have never appeared naturally in a
physical problem but which are studied in detail in the theory of hypersingular integrals (see e.g.~\cite{Samko} and references therein). In particular, the fractional wave operator $\Box^\al$ is defined for all noninteger~$\al$ as the multiplier
\begin{equation}\label{Boxal}
\widehat{\Box^\al f} (\tau,\xi):=\sigma_\al(\tau,\xi)\,\widehat{f}(\tau,\xi)\,,
\end{equation}
where the symbol $\sigma_\al$ is defined as
\begin{equation}
	\sigma_\al(\tau,\xi):=(|\xi|^2-\tau^2)^{\al} \chi_+(\tau,\xi) + e^{i\pi\al\sgn(\tau)} (\tau^2-|\xi|^2)^{\al}\chi_-	(\tau,\xi),\label{symbol1}
\end{equation}
with $\chi_\pm(\tau,\xi)$ denotes the indicator function of the set $\pm (|\xi|^2-\tau^2)>0$. Equivalently, choosing the principal branch of the complex logarithm one can write
\begin{equation}
	\sigma_\al(\tau,\xi):=\lim_{\epsilon\to 0^+}\big(|\xi|^2-(\tau-i\epsilon)^2\big)^{\al}.\label{symbol2}
\end{equation}
It should be noted that this is in fact a natural definition of
$\Box^\al$ in the sense that $\si_\al(\tau,\xi)$ raised to the
power of $1/\al$ gives $|\xi|^2-\tau^2$, i.e., the symbol of the wave
operator.\\

Our second objective in this paper is of a more geometric and physical
nature. Specifically, we will show that the fractional wave operator,
as considered in the theory of hypersingular integrals, does arise in
gravitational physics as the
Dirichlet-to-Neumann map of the Klein--Gordon equation \eqref{KG} in
anti-de Sitter spaces. Hyperbolic equations for
fields in anti-de Sitter backgrounds with nontrivial data on their
conformal boundaries have attracted much attention over the last two
decades, especially in connection with the celebrated AdS/CFT
correspondence in string theory~\cite{Maldacena,Witten}. Indeed, this
conjectural relation establishes a connection between conformal field
theories in $n$ dimensions and gravity fields on an $(n +
1)$-dimensional spacetime of anti-de Sitter type, to the effect that
correlation functions in conformal field theory are given by the
asymptotic behavior at infinity of the supergravity
action. Mathematically, this involves describing the solution to the
gravitational field equations in $(n + 1)$~dimensions (which, in the
simplest case of a scalar field reduces to the Klein--Gordon equation) in
terms of a conformal field, which plays the role of the boundary data
imposed on the (timelike) conformal boundary.

To present the specifics of this connection in the simplest case,  let
us start by recalling the basic structure of the anti-de Sitter (AdS)
of dimension $n+1$, which is a Lorentzian manifold of negative
constant sectional curvature (which we set here to $-1$) and, as such, satisfies the Einstein
equations with a negative cosmological constant. For concreteness, we will consider the
AdS half-space (see e.g.~\cite{Bachelot} for a mathematical
analysis of the problem and \cite{Ballon}~for the physics of this space). The metric can be
written using Poincar\'e coordinates
$(t,x,y)\in\mathbb{R} \times \mathbb{R}^{n-1} \times \mathbb{R}_+$
as
\begin{equation}
g^+:=\dfrac{dt^2-dy^2-|dx|^2}{y^2},\label{metric1}
\end{equation}
where $|dx|^2$ denotes the standard flat metric
on~$\RR^{n-1}$.

Analytically, here one can take boundary conditions on
the set $y=0$ (which is conformal to the $n$-dimensional Minkowski
space) and decay conditions at $y=\infty$. This makes it the obvious
generalization of the half-space model of the $(n+1)$-dimensional
hyperbolic space, which one describes in terms of the Poincar\'e coordinates
$(y,x)\in\RR_+\times\RR^n$ through the metric
\[
	g^+_{\mathbb H}:=\frac{dy^2+|dx|^2}{y^2},
\]
which is the natural setting for \eqref{elliptic} after a conformal change.

While we prefer to stick to the AdS half-space in this
Introduction, let us mention that in Section~\ref{section:geometry} we
will also consider in detail the problem for the usual AdS space,
whose conformal timelike infinity is the cylinder
$\RR\times\mathbb S^{n-1}$.  This is sometimes referred to as the
global AdS space in the context of the AdS/CFT conjecture and can be
covered by two half-space AdS charts. The results are qualitatively
the same but the algebra is less transparent. We will also
encounter the same behavior when we analyze more general stationary
asymptotically anti-de Sitter metrics, again in
Section~\ref{section:geometry}.


Let us then consider the
Klein--Gordon equation with
parameter~$\mu$ in the AdS space~\eqref{metric1},
\begin{equation}\label{KG}
\Box_{g^+}\phi+\mu\phi=0\,,
\end{equation}
where $\Box_{g^+}$ is the wave operator associated with the AdS metric:
\[
	\Box_{g^+}\phi:=y^2(\partial_{tt}\phi-\De_x\phi-\partial_{yy} \phi)-(1-n)y \partial_y\phi.
\]
Physically, $\mu$ is the mass of the
particle modeled by the Klein--Gordon equation plus a negative contribution from the scalar
curvature of the underlying space~\cite[Section 4.3]{Wald}. For our
purposes, it is convenient to assume that the parameter
\[
\al:=\bigg(\frac{n^2}4+\mu\bigg)^{1/2}\,
\]
takes values in the interval $(0,\tfrac{n}{2})$.

We then have that Equation \eqref{KG}
can be rewritten as a wave equation
with coefficients singular at $y=0$,
\begin{align}\label{eqphi2}
\partial_{tt}\phi-\De_x\phi-\partial_{yy} \phi- \frac{1-n}{y}\partial_y\phi +\frac{4\al^2-n^2}{4y^2}\phi=0\,.
\end{align}
A simple look at the singularities of the equation
reveals that the solutions are expected to scale at conformal
infinity as $y^{\tfrac{n}{2}\pm \al }$.
Then, since we are assuming
$\al>0$, the natural Dirichlet condition for this problem is
\begin{equation}\label{Dirichlet}
\lim_{y\to 0^+} y^{\al-\frac n2}\phi(t,x,y)=f(t,x)\,.
\end{equation}
If one prefers to prescribe a Neumann
condition~\cite{Warnick}, one must instead impose
\[
\lim_{y\to 0^+}y^{1-2\al}\,\pd_y\big(y^{\al-\frac n2}\phi(t,x,y)\big)=h(t,x)\,,
\]
for $\alpha\in(0,1)$, and a generalized Neumann condition $\alpha\in(1,\frac{n}{2})$, as we will see below.
With this notation in place, the main result of the paper is that the
fractional wave operator $\Box^\al$ (in flat space) is the
Dirichlet-to-Neumann map associated to switching on a nontrivial
boundary datum in an anti-de Sitter space:

\begin{thm}\label{T.main}
For any function $f\in C^\infty_0(\RR^n)$, let $\phi$ be the solution of the Klein--Gordon equation \eqref{KG}
with this Dirichlet boundary condition (Eq.~\eqref{Dirichlet}) and trivial
initial data at $-\infty$: $\phi(-\infty,x,y)=\phi_t(-\infty,x,y)=0$. Assume moreover that the
mass parameter $\al$ takes values in $(0,1)$. Then for any smooth
enough function $f$ one has the identity
\[
\Box^\al f(t,x)=c_{\al}\lim_{y\to 0^+}y^{1-2\al}\,\pd_y\big(y^{\al-\frac n2}\phi(t,x,y)\big)
\]
for an explicit constant
$c_{\al}=-2^{2\al-1}\Ga(\al)/\Ga(1-\al)$. More generally, if $\alpha\in
(0,\frac n2)$ is not an integer and we write $\al=m+\al_0$, with
$m:=\lfloor \al\rfloor$ the integer part, then
 \begin{equation*}
	\square^\al f(t,x)=c_\al\lim\limits_{y\to 0^+}
        y^{2(1-\al_0)}\Big(\frac{1}{y}\, \pd_y\Big)^{m+1} \big(y^{\al-\frac n2}\phi(t,x,y)\big),\label{DNgen}
\end{equation*}
with $c_\al:=(-1)^{m+1} 2^{\al+\al_0-1}\Ga(\al)/\Ga(1-\al_0)$.
\end{thm}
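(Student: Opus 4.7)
The plan is to Fourier transform in the Minkowski variables $(t,x)$ and reduce the problem to an explicit one-parameter family of ODEs in $y$. Writing $\hat\phi(\tau,\xi,y)$ for the partial Fourier transform and substituting $\hat\phi = y^{n/2}\psi$ to cancel the first-order drift, equation \eqref{eqphi2} becomes the parametric Bessel equation
\begin{equation*}
\psi'' + \tfrac{1}{y}\psi' + \bigl(\tau^2 - |\xi|^2 - \tfrac{\al^2}{y^2}\bigr)\psi = 0.
\end{equation*}
Decay at $y\to\infty$ together with the causal condition at $t=-\infty$ single out a unique normalized solution: encoding causality by the Feynman-type prescription $\tau\mapsto \tau - i\ep$ of~\eqref{symbol2} and writing $\la:=\sqrt{|\xi|^2 - (\tau - i\ep)^2}$ with the principal branch (so that $\Real\la > 0$), this solution is $\psi(\tau,\xi,y) = A(\tau,\xi) K_\al(\la y)$, since $K_\al$ is the unique (up to scalar) solution of the ODE that decays as $y\to \infty$.

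Next, I would fix $A(\tau,\xi)$ from the Dirichlet condition \eqref{Dirichlet}. Using the classical small-argument expansion
\begin{equation*}
K_\al(z) = \frac{\pi}{2\sin(\pi\al)}\sum_{k\ge 0}\frac{1}{k!}\Bigl[\frac{(z/2)^{2k-\al}}{\Ga(k-\al+1)} - \frac{(z/2)^{2k+\al}}{\Ga(k+\al+1)}\Bigr]
\end{equation*}
together with the reflection identity $\pi/[\sin(\pi\al)\Ga(1-\al)] = \Ga(\al)$, the leading $y\to 0^+$ behavior of $y^{\al-n/2}\hat\phi = A\,y^\al K_\al(\la y)$ is the constant $A\cdot 2^{\al-1}\Ga(\al)\la^{-\al}$, and matching with \eqref{Dirichlet} yields $A(\tau,\xi) = 2^{1-\al}\la^\al\,\hat f(\tau,\xi)/\Ga(\al)$.

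The Neumann computation then proceeds by observing that $y^{\al-n/2}\hat\phi$ splits into two Puiseux pieces: one in even powers $y^{2k}$ (from the first series of $K_\al$) and one in powers $y^{2k+2\al} = y^{2(k+m)+2\al_0}$ (from the second). The crucial algebraic identity $(\tfrac{1}{y}\pd_y)y^a = a\,y^{a-2}$ shows that $(\tfrac{1}{y}\pd_y)^{m+1}$ annihilates every analytic term $y^{2k}$ with $k \le m$ (one factor in the telescoping product vanishes), sends the leading anomalous term $y^{2\al}$ to $2^{m+1}\Ga(\al+1)\Ga(\al_0)^{-1} y^{2(\al_0-1)}$, and maps every remaining term to a strictly positive power of $y$; multiplication by $y^{2(1-\al_0)}$ followed by $y\to 0^+$ thus isolates a single constant. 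Collecting coefficients, substituting the expression for $A$, and simplifying via $\sin(\pi\al)=(-1)^m\sin(\pi\al_0)$ and $\pi/[\sin(\pi\al_0)\Ga(\al_0)] = \Ga(1-\al_0)$ shows that the Neumann data equals $c_\al^{-1}\la^{2\al}\hat f(\tau,\xi)$ with $c_\al$ exactly as in the statement. Since $\la^{2\al}\to \si_\al(\tau,\xi)$ in the limit $\ep\to 0^+$ by \eqref{symbol2}, inverse Fourier transformation delivers the claim.

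The main obstacle I anticipate is justifying rigorously that the formal Fourier-side expression $y^{n/2}A(\tau,\xi)K_\al(\la y)$ really represents the solution of \eqref{eqphi2} with trivial past data (rather than mixing in the advanced component or the $I_\al$ branch on the timelike region), and that the $\ep\to 0^+$ limit together with inverse Fourier transformation converges in a strong enough sense to justify the pointwise boundary identity for $f\in C_0^\infty(\RR^n)$. The termwise manipulation of the Bessel expansion and the resulting boundary asymptotics are, by contrast, routine once the distinguished solution has been correctly identified.
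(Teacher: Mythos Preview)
Your approach is essentially the same as the paper's: reduce to a Bessel ODE in~$y$ via a transform in the Minkowski variables, identify the decaying branch $K_\al$, and read off the Dirichlet-to-Neumann relation on the symbol side. Two technical points are worth noting. First, the paper handles precisely the gap you flag at the end by replacing the Fourier transform in~$t$ with the Laplace transform $U(s,\cdot)=\int_{t_0}^\infty e^{-s(t-t_0)}u(t,\cdot)\,dt$, $s=\ep+i\tau$; the trivial-past condition then enters cleanly through the lower limit of integration, and a separate energy estimate (Lemma~\ref{L.existence}) supplies the $L^\infty$-in-time bound needed for the Laplace transform to converge and for the subsequent $\ep\to0^+$ limit to be taken by dominated convergence. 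This is the rigorous implementation of your $\tau\mapsto\tau-i\ep$ prescription. Second, in place of your termwise manipulation of the $K_\al$ series, the paper uses the closed-form identity $\bigl(\tfrac{1}{z}\tfrac{d}{dz}\bigr)^{k}\bigl(z^{\nu}K_{\nu}(z)\bigr)=(-1)^k z^{\nu-k}K_{\nu-k}(z)$, which dispatches the Neumann computation in one line; your series argument is correct but longer.
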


Several remarks are in order. First, an elementary
observation is that this identity, which obviously applies to much
for general data by the bounds that we establish in this paper, implies that the well-known estimates for the
fractional wave operator immediately translate into assertions about
the Dirichlet datum of the solution and its associated Neumann
condition, and viceversa. Second, it is worth emphasizing that this relation can be
generalized to more general asymptotically AdS metrics, as we will do
it Section~\ref{section:geometry}. A third comment is that
Theorem~\ref{T.main} implies for
$\al\in(1,\frac n2)$ the operator $\square^\al$ can be naturally
interpreted as the scattering operator of the manifold. In Riemannian
signature, this connection is discussed in detail in~\cite{Mar}, and
quite remarkably
the interest in the
the conformally covariant operators on the boundary that it defines
(see e.g.\ \cite{Graham-Zworski,Mazzeo-Melrose,Fefferman-Graham}) was
originally fueled by  work of Newman, Penrose and LeBrun \cite{LeBrun}
on gravitational physics quite in the spirit of Maldacena's AdS/CFT
correspondence. 

Recall that, in
the Lorentzian case, the
construction of the scattering operator for a general asymptotically AdS
metric was carried out in~\cite{Vasy}, but the resulting operator was not characterized.

The paper is organized as follows. In Section~\ref{section:powers} we
will recall some basic facts and definitions about fractional wave
operators in a form that is particularly convenient for our
purposes, and in particular, a convolution formula with a singular kernel. In Section~\ref{section:extension} we will analyze the Klein--Gordon
equation and establish the main result in the half-space region of
anti-de Sitter space. This identity will be extended to more general
asymptotically AdS spaces, and to the global AdS space, in Section~\ref{section:geometry}.

\section{Fractional powers of the wave operator} \label{section:powers}

In this section we will recall some
basic facts about fractional powers of the wave operator, as defined
in~\eqref{Boxal}. In the following we will see
that, just as in the case of the fractional Laplacian ~\cite{Landkoff}, one can
represent $\Box^\al f$ as a regularized integral depending
analytically on the power $\al$ when $f$ is a sufficiently smooth
function. To derive this result in full generality, we will need the
analytic continuation of the classical hyperbolic Riesz potential
\cite{Riesz}, which will be explicitly constructed when the power is
not a positive half-integer.

A careful analysis of the poles of the multiplier~\eqref{symbol1}
shows that one can regard $\sigma_\al$ as a tempered distribution on
$\mathbb{R}^{n}$, analytic in the parameter $\al$ for $\al\neq
-\tfrac{n}{2}-k$ with $k$ a non-negative integer
(cf. e.g. \cite[Chapter III]{Gelfand}). This property is of crucial
importance to recover the fractional wave operator as a convolution
\[
	\Box^\al f=k_\al * f,
\]
where the kernel $k_\al$ coincides with the inverse Fourier transform
of $\sigma_\al$ in a sense that will be specified later on. Notice
that, while this relation holds true distributionally, it is not easy
to transform it into a pointwise converging formula due to the
singularities that the multiplier presents on the light cone.

To regularize the integrals that appear, we will use Riesz
distributions. Let us recall that, for any complex parameter $\al$
with $\Real\al>\frac{n}{2}-1$, $R_\al$  is the distribution whose action on a function $\vp\in
C^\infty_0(\RR^n)$ is given by the absolutely convergent integral
\begin{equation*}
	\langle R_\al, \varphi\rangle=C_{n,\al} \int_{\mathcal{K}_+^+} (t^2-|x|^2)^{\al-\frac{n}{2}} \varphi(t,x) \,dt\,dx,\quad \varphi\in C^\infty_0(\mathcal{K}_+^+),
\end{equation*}
where we write the points in $\RR^n$ as $(t,x)\in\RR\times\RR^{n-1}$,
\[
	\mathcal{K}_+^+:=\{(t,x)\in\mathbb{R}^{n}: t^2\geq |x|^2,\ t\geq 0\}.
\]
is the forward causal cone and the constant $C_{n,\al}$ takes the value
\begin{equation}
	 C_{n,\al}:=\frac{2^{1-2\al}\pi^{1-\frac{n}{2}}}{\Ga(\al)\,\Ga\big(\al+1-\frac{n}{2}\big)}.\label{Const}
\end{equation}

A straightforward computation shows that the map $\al\mapsto \langle
R_\al, \varphi\rangle$ is analytic in the half-plane
$\Real\al>\frac{n}{2}-1$. It is well-known (see e.g.~\cite{Kolk}) that
this mapping can be analytically continued to the whole complex plane
by means of the identity $\Box R_{\al+1}=R_\al$. Hence for
any complex number $\al$ it makes sense to consider the convolution of
the distribution $R_\al$ a smooth compactly supported function $f\in
C^\infty_0(\RR^n)$, which we will denote by $I_\al f$ and call the
{\em hyperbolic Riesz potential}\/ of~$f$. Notice that for $\Real\al>\frac{n}{2}-1$ the Riesz potential simply reads as
\begin{equation}
	I_\al f (t,x)=\int_{\mathcal{K}_+^+} (s^2-|y|^2)^{\al-\frac{n}{2}} f(t-s,x-y) \,ds\,dy.\label{RieszP}
\end{equation}

In the following proposition we establish the relationship among the powers of
the wave operator, the Riesz distribution and its associated
potential, showing that, as in the Euclidean case,
it is possible to understand $\Box^\al f$ as a regularized
integral represented by the analytic extension of the Riesz
potential. This relation was essentially stated in~\cite{Samko}
using the results of~\cite{Gelfand} on
the Fourier transform of analytically continued quadratic forms, but
we prefer to sketch the proof here rather than to refer to vague
variations on results
from the above references.

\begin{proposition}\label{P.sial}
Let $\al$ be a complex number such that $\al+\frac n2$ is not a
non-positive integer. Then the Fourier transform of $R_{\al}$ is the function
\begin{equation}
	\widehat{R}_{\al}(\tau,\xi)=\sigma_{-\al}(\tau,\xi),\label{symbol+Riesz}
\end{equation}
so, in particular, for any function $f\in C^\infty_0(\RR^n)$ the
$\al\th$ power of the wave operator is given by
\begin{equation}
\Box^\al f=I_{-\al} f.\label{PowersWave}
\end{equation}
\end{proposition}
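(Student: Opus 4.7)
The plan is to reduce the proposition to the Fourier-multiplier identity \eqref{symbol+Riesz} and then deduce \eqref{PowersWave} via the convolution theorem. Both $R_\al$ and $\sigma_{-\al}$ are tempered-distribution-valued meromorphic functions of $\al$, with singularities confined to the excluded set of the statement; since meromorphic families with values in a locally convex space are determined by their restriction to any nonempty open subset of the parameter space, it suffices to verify \eqref{symbol+Riesz} on a convenient part of the admissible region.

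A direct computation is feasible in the half-plane $\Real\al>\tfrac n2-1$, where $R_\al$ is represented by \eqref{RieszP} as a locally integrable function of polynomial growth. Since $R_\al$ is supported in the forward causal cone $\mathcal{K}_+^+$, the Fourier-Laplace integral
\[
\int_{\mathcal{K}_+^+} (t^2-|x|^2)^{\al-\frac n2}\, e^{-i((\tau-i\eta)t-\xi\cdot x)}\, dt\, dx
\]
converges absolutely for every $\eta>0$ and defines a holomorphic function on the tube $\Imag\tau<0$, whose distributional boundary value as $\eta\to 0^+$ equals $\widehat R_\al$. To evaluate this integral one first exploits the Lorentz invariance of $R_\al$ to replace the complex frequency vector by $(\zeta,0)$ with $\zeta:=\sqrt{(\tau-i\eta)^2-|\xi|^2}$ on the principal branch, and then introduces hyperbolic coordinates $t=s\cosh\theta$, $|x|=s\sinh\theta$ on $\mathcal{K}_+^+$, which factorise the remaining integral into a product of two Gamma integrals (one in $s$ producing $\zeta^{-2\al}$ up to Gamma factors, one in $\theta$ giving a Bessel-type angular contribution that reduces to Gamma factors via the duplication formula). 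The normalisation $C_{n,\al}$ in \eqref{Const} is tuned precisely so that all these Gamma factors collapse, leaving the clean expression $\zeta^{-2\al}$; passing to the boundary value and comparing with \eqref{symbol2} identifies $\widehat R_\al(\tau,\xi)=\sigma_{-\al}(\tau,\xi)$ on the region considered.

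Analytic continuation in $\al$ then propagates the identity to the full admissible range; as an internal check, the recurrence $\Box R_{\al+1}=R_\al$ translates, via the symbol $|\xi|^2-\tau^2$ of $\Box$, into the compatible multiplicative relation $(|\xi|^2-\tau^2)\,\sigma_{-\al-1}=\sigma_{-\al}$, which is manifest from \eqref{symbol2}. Finally, for $f\in C^\infty_0(\RR^n)$ the convolution $I_{-\al}f=R_{-\al}*f$ is a well-defined smooth tempered distribution (compactly supported smooth convolved with a tempered distribution), and taking its Fourier transform yields $\sigma_\al\,\widehat f=\widehat{\Box^\al f}$ by \eqref{Boxal}, establishing \eqref{PowersWave}. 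The main obstacle lies in the second paragraph: one has to track the principal-branch prescription inherited from the $i0^+$-regularisation in \eqref{symbol2} and verify that $C_{n,\al}$ really absorbs all Gamma factors produced by the hyperbolic-coordinate integration, a bookkeeping issue where a misplaced sign or branch choice would corrupt the identification with $\sigma_{-\al}$.
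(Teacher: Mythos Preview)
Your overall architecture is correct and matches the paper's: verify \eqref{symbol+Riesz} by a direct computation in the half-plane $\Real\al>\tfrac n2-1$, then propagate by analytic continuation in~$\al$, and deduce \eqref{PowersWave} from the convolution theorem. The difference lies entirely in how the Fourier--Laplace integral is evaluated.

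The paper does not exploit Lorentz invariance. It keeps the full frequency $(\tau-i\epsilon,\xi)$, passes to polar coordinates in the spatial variable~$x$, and recognises the radial and temporal integrals as the integral representations of $J_{(n-3)/2}$ and $K_{\al+(1-n)/2}$; a classical Bessel integral then collapses everything to $(|\xi|^2+(\epsilon+i\tau)^2)^{-\al}$. Your route---reduce to $\xi=0$ by Lorentz invariance plus holomorphy on the tube, then use hyperbolic coordinates on $\mathcal K_+^+$---is more elementary in that it avoids Bessel functions entirely: the $s$-integral is a Gamma integral and the $\theta$-integral is a Beta integral, with the duplication formula tying the constants to $C_{n,\al}$. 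Two points deserve tightening. First, the sentence ``replace the complex frequency vector by $(\zeta,0)$'' should be phrased as: the Fourier--Laplace transform is holomorphic on the forward tube and invariant under the orthochronous Lorentz group acting on that tube, hence depends only on the invariant $(\tau-i\eta)^2-|\xi|^2$; one may therefore evaluate at $\xi=0$. One cannot literally boost a complex vector to a real multiple of the time axis. Second, the hyperbolic-coordinate integral does not factorise outright; the $s$-integration produces a factor $(\cosh\theta)^{-2\al}$, and only then does the $\theta$-integral become a Beta integral, so ``a product of two Gamma integrals'' is a slight overstatement. With these clarifications your argument goes through and is a clean alternative to the paper's Bessel-function computation.
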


\begin{proof}
Let us first assume
that $\Real\al>\frac{n}{2}-1$. The Fourier transform
$\widehat{R}_{\al}$ is then the distribution given by
\begin{align*}
\langle \widehat{R}_\al, \varphi\rangle&=\langle R_\al, \widehat{\varphi}\rangle=C_{n,\al}\int_{\mathcal{K}_+^+} (t^2-|x|^2)^{\al-\frac{n}{2}} \widehat{\varphi}(t,x) \,dt\,dx\\
&=C_{n,\al}\lim_{\epsilon\to 0^+}\int_{\mathcal{K}_+^+} (t^2-|x|^2)^{\al-\frac{n}{2}} e^{-\epsilon t}\widehat{\varphi}(t,x) \,dt\,dx\\
	&=C_{n,\al}\int_{\mathbb{R}^n} \lim_{\epsilon\to 0^+}\left( \int_{\mathcal{K}_+^+} (t^2-|x|^2)^{\al-				\frac{n}{2}}e^{-i\xi x} e^{-t(\epsilon+i\tau)}\,dt\,dx\right)\varphi(\tau,\xi) \,d\tau\,d\xi,
\end{align*}
where we have used dominated convergence and Fubini's theorem.

The inner integral is computed by passing to polar coordinates (for $n>3$; the case $n=3$ is much simpler) and using the well-known representation formulas \cite{Watson} for the Bessel functions
\begin{align}
	J_\nu(z) &= \frac{ (\frac{z}{2})^\nu }{ \al(\nu + \frac{1}{2} ) \sqrt{\pi} } \int_{-1}^{1} e^{izs}					(1 - s^2)^{\nu - \frac{1}{2} } \,ds, \qquad \Real(\nu)>-\tfrac{1}{2},\ z\in\mathbb{C}\\
	K_{\nu}(z)&=\frac{\sqrt{\pi}(\frac{z}{2})^{\nu}}{\al(\nu+\frac{1}{2})}\int_{1}^{\infty}e^{-					zw}(w^{2}-1)^{\nu-\frac{1}{2}}\,dw,\qquad \Real(\nu)>-\tfrac{1}{2},\ |\Arg(z)|<\tfrac{\pi}{2},\label{Bessel}
\end{align}
and the Bessel integral
\[
	\int_{0}^{\infty}r^{\mu+\nu+1}K_{\mu}(a r) J_{\nu}(br)dr=\frac{(2a)^{\mu}(2b)^{\nu}\Ga(\mu+
		\nu+1)}{(a^{2}+b^{2})^{\mu+\nu+1}},
\]
for $\Real\nu+1>|\Real\mu|$ and $ \Real a>\Imag b$. It is not difficult then to check that one can write our integral in the half plane $\Real\al>\frac{n}{2}-1$ as
\begin{align*}
	C_{n,\al}\int_{\mathcal{K}_+^+} (t^2&-|x|^2)^{\al-\frac{n}{2}}e^{-i\xi x} e^{-t(\epsilon+i\tau)} \,dt\,dx\\
	&=2^{\al}\pi^{\frac{n}{2}-1}\al(\al+1-\tfrac{n}{2})C_{n,\al}\\
	&\qquad\times |\xi|^{\frac{3-n}{2}}(\epsilon+i\tau)^{\frac{n-1}{2}-\al}\int_{0}^\infty
	 r^\al K_{\al+\frac{1-n}{2}}(r(\epsilon+i\tau))J_{\frac{n-3}{2}}(r|\xi|)\,dr\\
	&=(|\xi|^2+(\epsilon+i\tau)^2)^{-\al}.
\end{align*}
Taking now the limit as $\epsilon\to 0^+$ we get \eqref{symbol+Riesz}.

To complete the proof we recall that, as pointed out right before
introducing the concept of Riesz distribution, $\sigma_{\al}$ defines
a tempered distribution that is analytic for any complex $\al$ except
for $\al=-\tfrac{n}{2}-k$ with $k\in\mathbb{N}$. Therefore, although
we have proved $\eqref{symbol+Riesz}$ when the parameter takes values
in a certain open set of the complex plane, this relation must hold in
the whole domain where $R_\al$ can be analytically
continued. Moreover, since by definition $\widehat{\Box^\al
  f}=\sigma_\al \widehat{f}$ for all $\al$, it is also true that $\Box^\al
f=R_{-\al}* f=I_{-\al} f$. The proposition then follows.
\end{proof}

The above proposition is roughly the analog of the formula~\eqref{Dega2} for the
fractional Laplacian, which for $0<\al<n$ allows us to write the
fractional Laplacian $(-\Delta)^{-\al}f$, up to a multiplicative
constant, as the convolution of $f$ with the locally integrable
function $|x|^{2\al-n}$. Notice, however, that the singularities in
the integral kernel are much stronger here.

For the benefit of the reader we shall next record an explicit formula for~$\Box^\al f$ in terms of
integrals regularized via suitable finite difference operators, which we borrow from \cite[Eq.~(9.93)]{Samko},
and connect it with the previous proposition. Before we can state the
result, let us first introduce some further notation. For $q$ a real
parameter and $k,l\in\mathbb{N}$, let us denote the $q$-number of $k$,
its $q$-factorial and the $q$-binomial coefficient as
\[
    [k]_{q}={\frac {1-q^{k}}{1-q}},\qquad  [k]_{q}!=[1]_{q}[2]_{q}\cdots [k]_{q},\qquad  \binom{l}{k}_{q}={\frac {[l]_{q}!}{[k]_{q}!\,[l-k]_{q}!}}\quad (k\leq l),
\]
respectively. In addition, let us define the $q$-functions
\begin{equation}
C_{k}^l=q^{k(\frac{k+1}{2}-l)}\binom{l}{k}_q,\qquad A^l_{\mu}=\sum _{k=0}^l (-1)^{k} q^{k\mu} C_{k}^l,\label{qfunc}
\end{equation}
with $\mu\in\mathbb{C}$ and where we omit the dependence on $q$ for notational simplicity.

We are now ready to write down the integral formula for $\Box^\al f$.
For simplicity we will assume that $f\in C^\infty_0(\RR^n)$, but
the result it is still true e.g.\ for $C^l(\RR^n)$ functions
that decay fast enough at infinity, with $l>2\al$.

\begin{proposition}\label{P.conv}
Let us take a real $\al\in (0,\frac{l}{2})$, where $l\in\mathbb{N}$
and we assume that $\al$ is not a half-integer. Then for any $f\in C^\infty_0(\RR^n)$ the fractional
wave operator $\Box^\al$ is given by the absolutely
convergent integral
\begin{equation}
	\Box^{\al}f(t,x)=
		C_{n,-\al}\int_{\mathbb{R}_+\times\mathbb{R}^{n-1}}\dfrac{\Delta_{s,y}^{l,\al}f(t,x)}{s^{\frac{n}{2}+\al}
			|{y}|^{n+2\al-1}}ds\ dy,\label{WaveIntegral}
\end{equation}
where $ \Delta_{s,y}^{n,\al}$ stands for the difference operator
\[
	 \Delta_{s,y}^{l,\al}f(t,x)=\dfrac{1}{A^{l^*}_{\frac{n}{2}-1+\al}A^l_{2\al} }
	\sum_{j=0}^{l^*}\sum_{k=0}^l (-1)^{j+k}C_{j}^{l^*} C_{k}^l
			\dfrac{(1+q^j s)^{2\al}}{(2+q^j s)^{\frac{n}{2}+\al}}f\Big( t-q^k |{y}|, {x}-\dfrac{q^k {y}}{1+q^j s} \Big),
\]
$l^*$ is the integer part of
$\frac{n+l-1}{2}$, $q\neq1$ is a positive constant, $C_{k}^l$ and
$A^l_{\mu}$ are the $q$-functions defined in \eqref{qfunc}, and the
constant $C_{n,\al}$ is given by~\eqref{Const}.
\end{proposition}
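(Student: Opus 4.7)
The plan is to deduce \eqref{WaveIntegral} from the identity $\Box^\al f=R_{-\al}*f$ in Proposition~\ref{P.sial} by recognizing the $q$-difference operator $\Delta^{l,\al}_{s,y}$ as a Marchaud-type hypersingular regularization that renders the otherwise divergent Riesz-type integral absolutely convergent for $\al\in(0,l/2)$. First I would change variables on the right-hand side of \eqref{WaveIntegral} so that $(s,y)\in\mathbb{R}_+\times\mathbb{R}^{n-1}$ is mapped onto the forward light cone $\mathcal{K}_+^+$; under an appropriate choice, the product weight $s^{-\frac n2-\al}|y|^{-n-2\al+1}\,ds\,dy$ becomes a multiple of the causal weight $(\tau^2-|z|^2)^{-\al-\frac n2}\,d\tau\,dz$, exposing the right-hand side as a regularized version of the integral \eqref{RieszP} that defines $I_{-\al}f$ in its strip of absolute convergence $\Real(-\al)>\frac n2-1$.

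Next I would verify that the regularized integral converges absolutely for every non-half-integer $\al\in(0,l/2)$. The crucial property of the $q$-coefficients is that $A^l_\mu$ annihilates the weighted monomials $q^{0\cdot j},q^{1\cdot j},\dots,q^{(l-1)j}$ against $(-1)^k q^{k\mu}C^l_k$; together with the analogous statement for $A^{l^*}_{\frac n2-1+\al}$, this forces $\Delta^{l,\al}_{s,y}f(t,x)$ to vanish with a zero of order $l$ in $|y|$ and order $l^*$ in $s$ at the origin. Since $l>2\al$ and $l^*>\frac n2+\al-1$ by construction, the finite-difference decay beats the singular weight near $(s,y)=(0,0)$. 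At infinity, the decay $s^{\al-\frac n2}$ of the factor $(1+q^js)^{2\al}(2+q^js)^{-\frac n2-\al}$ together with the compact support of $f$ secures integrability.

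The third step is to identify the convergent integral with $\Box^\al f$. Applying the Fourier transform in $(t,x)$ turns each evaluation $f(t-q^k|y|,x-q^k y/(1+q^js))$ into a phase $e^{iq^k(\tau|y|+\xi\cdot y/(1+q^js))}\,\widehat f(\tau,\xi)$, so the claim reduces to verifying that the resulting scalar symbol coincides with $\sigma_\al(\tau,\xi)$. Both the integral symbol and $\sigma_\al$ are analytic in $\al$ throughout the strip $0<\Real\al<l/2$ (avoiding half-integers), so it suffices to establish the equality on a small sub-interval on which the original unregularized Riesz integral already converges and the two $q$-differences can be telescoped away; there, the identity reduces to the Fourier computation of $R_{-\al}$ already carried out in the proof of Proposition~\ref{P.sial}.

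The hard part will be the combinatorial bookkeeping attached to the $q$-functions $C^l_k$ and $A^l_\mu$. The double sum does not factor, because the argument of $f$ depends simultaneously on $j$ (through the denominator $1+q^js$) and $k$ (through the factor $q^k$), and one must follow how the two $q$-regularizations intertwine to produce both the sharp vanishing orders at the origin and, after the Fourier step, exactly the prefactor $C_{n,-\al}$ of \eqref{Const}. The underlying identities are $q$-analogues of the classical vanishing moments of forward differences and ultimately reduce to successive applications of the $q$-Vandermonde sum. Since the statement is borrowed from \cite[Eq.~(9.93)]{Samko}, an alternative is to match conventions with that reference and invoke the cited identity rather than reprove it here.
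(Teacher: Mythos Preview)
Your plan is essentially the paper's own: both arguments prove absolute convergence by extracting the vanishing orders $s^{l^*}|y|^l$ from the $q$-identity $A^l_m=0$ for $0\le m\le l-1$ (the Gauss--type product formula you allude to), and then identify the integral with $I_{-\al}f$ by analyticity once equality is checked on a region where the unregularized Riesz integral already converges. The paper carries out that last identification more directly than your Fourier-side step~3: instead of computing a symbol, it performs the change of variables $u=q^k|y|$, $v=q^k|y|/(1+q^js)$ (followed by polar coordinates in $y$) term by term in the double sum, which simultaneously collapses the $q$-differences and reproduces the formula \eqref{RieszP} with $\al\mapsto-\al$, while also making manifest the independence of the auxiliary parameter~$q$.
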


\begin{proof}

By Proposition~\ref{P.sial} the result is equivalent to showing
that the integral \eqref{WaveIntegral} represents the extension of
$I_{-\al}f$ to the strip $0<\al<l/2$. Firstly, observe that this
integral converges absolutely on this interval except for integers and
semi-integers values of $\al$. Indeed, a tedious but straightforward
computation of the Taylor series of $\Delta_{s,y}^{l,\al}f$ about zero
combined with the fact that $A^l_{m}=0$ for $0\leq m\leq l-1$ as a
consequence of the well-known identity \cite{Cigler}
\begin{equation*}
\sum_{k=0}^l q^{k\frac{(k-1)}{2}}z^k \binom{l}{k}_q=\prod_{k=0}^{l-1}(1+z q^k)
\end{equation*}
applied to $z=-q^{1+m-l}$, reveal that
\begin{align*}
	|\Delta_{s,y}^{l,\al}f|\leq C_q  s^{l^*}|{y}|^l  \| D^l f \|_{\infty}+\mathcal{O}\big(s^{l^*}|{y}|^{l+1} \| D^{(l+1)} f\|_{\infty}\big)
\end{align*}
for $(s,y)$ close to zero and where $C_q$ is a real constant that depends only on $q$. From this bound and since $f$ vanishes at infinity, it immediately follows that the integral is finite when $\al\in (0,\frac{l}{2})$ except for the zeros of $A^{l^*}_{\frac{n}{2}-1+\al}$ and $A^l_{2\al}$, which correspond to the points of the form $\al=\frac{k}{2},\ k\in\mathbb{N}$ when $q\neq 1$ is a positive real number.

Now we need to prove that the formula above actually represents the analytic continuation of the Riesz potential from the half plane $\Real\al>\frac{n}{2}-1$ to the range $\al\in (0,\tfrac{l}{2})$. Note that by \eqref{PowersWave} and the form of the differences operator, it suffices to show that \eqref{WaveIntegral} coincides with the expression of the potential given in \eqref{RieszP} replacing $\al$ by $-\al$. This assertion can be readily checked by introducing new variables
\[
u:=q^{k} |{y}|,\qquad v:=q^{k} \dfrac{|{y}|}{1+q^j s}
\]
and changing to polar coordinates in ${y}$. Incidentally, since the
above substitutions also remove the dependence of the operator
$\Box^\al f$ on the parameter $q$, then one can safely choose any
positive $q$ other than the limit value $q=1$.
\end{proof}

\begin{remark}
It is worth noticing that even in the simplest case, $0<\al<1$ and
$n=2$, the formula for the fractional wave operator is much more
involved than its Euclidean counterpart and cannot be deduced from
it. This reflects the different nature of the singularities of the
corresponding kernels: the entire light cone in the hyperbolic case
and a single point in the Euclidean setting. Remarkably, in the simple
case that we are now discussing ($n=2$, $0<\al<1$) there is another realization of $\Box^\al$
(cf.\ \cite[Theorem~9.30]{Samko}) that is easier to compare with its fractional Laplacian analog:
\begin{equation}
	\Box^\al f (t,x)=\dfrac{C_{2,-\al }}{2^{1+2\al}}\int_{\mathcal{K}_+^+} \dfrac{T_{s,y}f(t,x)}{(s^2-y^2)^{1+\al}} ds\ dy ,\label{kernel2}
\end{equation}
where we are using the finite difference operator
\begin{align*}
	T_{s,y}f(t,x):= f(t,x)-f\big(t-\tfrac{s+y}{2},x-\tfrac{s+y}{2}\big)- f\big(t-\tfrac{s-y}{2},x+\tfrac{s-y}{2}\big) +f(t-s,x-y).
\end{align*}
Note that one can readily use this formula
and the crude approximations
\[
T_{s,y}f(t,x) = -s y\ \Box f(t,x)+O(s^2+y^2)\,,\qquad \Gamma(-1+\ep)^{-1}=-\ep+O(\ep^2)
\]
to prove
the pointwise convergence of $\Box^\al f(t,x)$ to $\Box f(t,x)$ as
$\al\to 1$.
\end{remark}

\section{The Klein--Gordon equation in AdS spaces}\label{section:extension}

In this section we will connect the fractional powers of the wave
operator with the solutions to the mixed initial-boundary problem
corresponding to the Klein--Gordon equation in an anti-de Sitter
space. Our specific goal here is to give a local realization of the
fractional wave operator as a Dirichlet-to-Neumann map that is the
Lorentzian counterpart of the relation for the fractional Laplacian derived in~\cite{Caffarelli}
and extended in~\cite{Mar}. For this
purpose, we will use a Laplace--Fourier transform in order to
transform our wave equation into an ODE that contains the relevant
information about the solution at infinity,
which will enable us to derive the Dirichlet-to-Neumann map in Fourier space.

Recall that our starting point was
the Klein--Gordon equation
\[
	\Box_{g^+}\phi+\Big(\al^2-\dfrac{n^2}{4}\Big)\phi=0,
\]
where $\Box_{g^+}$ denotes
the wave operator associated to the AdS metric \eqref{metric1}.
As discussed in the Introduction,
this equation can be thought as a wave
equation with coefficients that are singular at conformal infinity
and whose indicial roots are $n/2\pm \al$.
For simplicity, we will henceforth assume that $\al$ is not a
half-integer (i.e., $2\al\not\in \NN$) in order to ensure that the solution
does not have a logarithmic branch cut. The argument carries over to
the case that $\al\not\in\NN$ with minor modifications.

Let us begin with the analysis of the
wave equation ~\eqref{eqphi2}.
For this, is convenient to define the function
\[
u(t,x,y):=y^{\al-\tfrac{n}{2}}\phi(t,x,y)\,,
\]
which satisfies the equation
\begin{equation}\label{equ2}
\partial_{tt} u-\De_xu-\partial_{yy }u-\frac{1-2\al}y \partial_y u=0
\end{equation}
with Dirichlet datum
\begin{equation}\label{Dirichletu}
u(t,x,0)=f(t,x)\,.
\end{equation}
We shall next prove a bound for~$u$ that will be needed later in this
section. To state it, let us consider the weighted Lebesgue space
\[
\La:= L^2(\RR^{n}_+,y^{1-2\al} dx \,dy)\,,
\]
endowed with the norm
\[
\|v\|_{\La}^2:=\int_{\RR^{n}_+}v^2\, y^{1-2\al} dx \,dy\,,
\]
and denote by $\HH$ (respectively, $\HHo$) the closure of
$C^\infty_0(\overline{\RR^n_+})$ (respectively, $C^\infty_0(\RR^{n}_+)$) with
respect to the norm
\[
\|v\|_{\HH}^2:=\int_{\RR^{n}_+}\big(|\nabla_xv|^2+(\partial_y v)^2\big)\, y^{1-2\al} dx \,dy\,.
\]

In the following theorem we only state a qualitative result for
boundary data $f\in C^\infty_0(\RR^n)$, which is what we need in the paper, but in the
proof we provide quantitative estimates that obviously extend the
result to data in more general function spaces.

\begin{lemma}\label{L.existence}
Let $k$ be the lowest integer such that $k>\frac{1+\al}{2}$.
Given a boundary datum $f\in C^\infty_0(\RR^n)$ and an integer $j<k$,
define $$u_j(t,x,y):=y^{2j}\,\chi(y)\,\Box^j f(t,x) \,,$$
where $\chi(y)$ is a fixed smooth cutoff function identically $1$ in $y<1$
and $0$ in $y>2$.
Then there are real numbers $c_j$ and a function $v\in
L^\infty(\RR,\dot H^1_\al)$ such that
\begin{equation}
	u(t,x,y):=\sum_{j=0}^{k-1}c_j u_j(t,x,y) + v(t,x,y)\,, \label{eqsum}
\end{equation}
is the unique solution of Equation~\eqref{equ2}
with trivial initial data $u(-\infty,x,y)=u_t(-\infty,x,y)=0$
and boundary condition $u(t,x,0)=f(t,x)$.
\end{lemma}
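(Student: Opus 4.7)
My plan is to peel off from $u$ an explicit finite Frobenius-type expansion at the regular indicial root $y^0$ of Equation~\eqref{equ2}, and then to solve the leftover problem for $v$ in the weighted energy space $\HH$ by standard hyperbolic energy methods. The ansatz $u = \sum_{j=0}^{k-1} c_j u_j + v$ with $u_j := y^{2j}\chi(y)\,\Box^j f$ is tailored so that $u_0(t,x,0)=f(t,x)$ carries the Dirichlet datum while every $u_j$ with $j\geq 1$ vanishes at $y=0$; the constants $c_j$ are then to be chosen so that the residual source
\[
F := -\mathcal{L}\bigg(\sum_{j=0}^{k-1} c_j u_j\bigg),\qquad \mathcal{L}:=\pd_{tt}-\Delta_x-\pd_{yy}-\tfrac{1-2\al}{y}\pd_y,
\]
is small enough at the conformal boundary to lie in $L^2(\RR;\La)$.

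A direct computation in the region $\{\chi=1\}$ yields
\[
\mathcal{L}\big(y^{2j}\,\Box^j f\big) = y^{2j}\,\Box^{j+1} f - 4j(j-\al)\,y^{2j-2}\,\Box^j f,
\]
so setting $c_0 = 1$ and defining the remaining $c_j$ recursively by the cancellation law $c_{j-1} = 4j(j-\al)\,c_j$ annihilates every power of $y$ strictly below $y^{2k-2}$; the denominators $j(j-\al)$ never vanish because $\al$ is not an integer under our hypotheses. The residual $F$ is then smooth, compactly supported in $(t,x)$, of order $O(y^{2k-2})$ as $y\to 0^+$, plus smooth pieces supported in $1\leq y\leq 2$ coming from $\chi',\chi''$. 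Consequently $F\in L^2(\RR;\La)$ iff $\int_0^1 y^{2(2k-2)}\,y^{1-2\al}\,dy<\infty$, i.e.\ $k>(1+\al)/2$, which is precisely the hypothesis on~$k$.

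With $F$ in hand, $v$ must solve $\mathcal{L}v = F$ with $v(t,x,0)=0$ and $v\equiv 0$ for $t\ll 0$. The spatial operator $L := -\Delta_x-\pd_{yy}-\tfrac{1-2\al}{y}\pd_y$ is the Friedrichs extension of the symmetric form $\int(\nabla_xv\cdot\nabla_xw+\pd_yv\,\pd_yw)\,y^{1-2\al}\,dx\,dy$ on $\HHo$, hence self-adjoint and non-negative on $\La$. I would construct $v$ by a Galerkin approximation of the first-order system in $(v,\pd_t v)$, deriving the basic a priori bound from the energy identity
\[
\tfrac{d}{dt}\tfrac12\!\int_{\RR^n_+}\!\big(|\pd_t v|^2+|\nabla_xv|^2+(\pd_yv)^2\big)y^{1-2\al}\,dx\,dy = \int_{\RR^n_+} F\,\pd_t v\,y^{1-2\al}\,dx\,dy,
\]
combined with Cauchy--Schwarz, Grönwall, and the compact $t$-support of $F$; this yields $v\in L^\infty(\RR;\HH)$, and uniqueness of $u$ follows from the same identity applied to the difference of two solutions, which solves the homogeneous problem with zero past data.

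The delicate point I expect to fight is the trace theory at $y=0$ in the weighted spaces $\HH,\HHo$: for $\al\geq 1/2$ the weight $y^{1-2\al}$ is no longer Muckenhoupt $A_2$ on $\RR_+$, so it is not automatic that elements of $\HHo$ have vanishing trace nor that the boundary contribution to Green's identity disappears. I would dispose of this by approximating $\HHo$ elements by $C^\infty_0(\RR^n_+)$ test functions and verifying that $u_0=\chi(y)f$ realizes the Dirichlet datum in exactly the trace sense complementary to $\HHo$, so that $u-\sum c_j u_j - v$ lies in $\HHo$ and the energy-based uniqueness forces it to vanish.
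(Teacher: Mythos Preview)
Your proposal is essentially the same proof as the paper's: the same Frobenius-type ansatz $u=\sum_{j=0}^{k-1}c_j\,y^{2j}\chi(y)\,\Box^jf+v$, the same recursion $c_{j-1}=4j(j-\al)c_j$ with $c_0=1$ (the paper writes the solution explicitly as $c_j=(-1)^j/\prod_{l=1}^j 4l(l-\al)$, up to an immaterial sign convention), the same observation that the residual source is $O(y^{2k-2})$ and hence belongs to the weighted space exactly when $k>\tfrac{1+\al}{2}$, and the same energy identity to obtain $v\in L^\infty(\RR,\HHo)$. The only cosmetic differences are that the paper phrases the integrability as $F\in L^1(\RR,\La)$ rather than $L^2(\RR,\La)$ (equivalent here since $f$ has compact time support) and sidesteps your trace-theory worry by \emph{defining} $\HHo$ as the closure of $C^\infty_0(\RR^n_+)$ and arguing by density.
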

\begin{proof}
Let us consider the initial condition
\begin{equation}\label{inicu}
u(t_0,\cdot)=u_t(t_0,\cdot)=0\,,
\end{equation}
where $t_0$ is any number such that
$f(t,x)=0$ for all $t<t_0$. We shall see in the proof that the
solution is independent of the choice of $t_0$, so it is equivalent to
imposing $u(-\infty,\cdot)=u_t(-\infty,\cdot)=0$.

We shall start by considering an auxiliary non-homogeneous Cauchy
problem of the form
\begin{subequations}
\begin{align}
	\partial_{tt} v-\De_xv-\partial_{yy} v-\frac{1-2\al}y \partial_y v&=F(t,x,y), \\
	v(t_0,\cdot)=v_t(t_0,\cdot)&=0.
\end{align}\label{wave*}
\end{subequations}
We shall next show that if $F\in L^1(\RR,\La)$, there is a unique
solution
$$v\in L^2\loc(\RR, \HHo)\cap H^1\loc(\RR,
\La)$$
to this equation. Notice that the fact that $v(t,\cdot)$ takes
values in $\HHo$ means that we are imposing the boundary condition
$v|_{y=0}=0$.

To prove this, we will use an a priori estimate for the energy
associated to the solution $v$, which we define as
\[
	E_v(t):=\dfrac{1}{2}\int_{\mathbb{R}^{n+1}_+} (v_t^2+|\nabla_x
        v|^2+v_y^2)\,y^{1-2\al} \,dx\,
        dy.
\]
To prove this estimate, it is standard that by a density argument one can assume that $v$ is
in $C^\infty_0(\RR^{n+1}_+)$, differentiate under the integral sign
and integrate by parts to find that
\begin{align*}
	\frac d{dt} E_v(t)&=\int_{\RR^n_+} (v_t v_{tt}+\nabla_x v_t
        \cdot \nabla_x v+ v_yv_{yt})\, y^{1-2\al} dx\,dy \\
		&=\int_{\RR^n_+}
                v_t\left(v_{tt}-\De_xv-v_{yy}-\frac{1-2\al}y
                  v_y\right)  \, y^{1-2\al} dx\, dy\\
&=\int_{\RR^n_+} F\, v_t\, y^{1-2\al} dx\, dy\\
&\leq
C\|F(t,\cdot)\|_{\La}\, E_v(t)^{1/2}\,.
\end{align*}
Using now Gr\"{o}nwall's inequality, we arrive at
\[
	E_v(t)^{1/2}\leq   E_v(t_0)^{1/2}+C\bigg|\int_{t_0}^t \|F(t',\cdot)\|_{\La} dt'\bigg|,
\]
which, by the trivial initial conditions, readily implies the estimate
\begin{equation}
\sup_{t\in\RR}\,(\| v(t,\cdot)\|_{\HH}+\|v_t(t,\cdot)\|_{\La}) \leq C
\int_{-\infty}^{\infty}\|F(t',\cdot)\|_{\La}\, dt',
\label{estimate*}
\end{equation}
thereby ensuring that $v\in L^\infty(\RR,\HHo)$.
It is standard that this estimate leads to the existence of a unique solution
$v\in L^2\loc (\RR,\HHo)\cap H^1\loc(\RR, \La)$ to the
problem~\eqref{wave*}.

To apply the estimate~\eqref{estimate*} in our problem,
let us set
\[
v(t,x,y):= u(t,x,y)-\sum_{j=0}^{k-1}\frac{(-1)^j
  y^{2j}\chi(y)}{\prod_{l=1}^j 4l(l-\al)}\Box^j f(t,x)\,,
\]
where the product is to be taken as~1 when $l=0$.
Using now that
\[
\Big(\pd_{yy}+\frac{1-2\al}y\pd_y\Big)y^j=j(j-2\al)\, y^{j-2}\,,
\]
a direct calculation shows that
\begin{equation}\label{eqv}
	\partial_{tt} v-\De_xv-\partial_{yy}v-\frac{1-2\al}y
        \partial_y v=\frac{(-1)^{k}y^{2k-2}\chi(y)}{\prod_{l=1}^{k-1} 4l(l-\al)}\Box^{k}f+
        \sum_{j=0}^{k-1}\chi_j(y)\, \Box^j f\,,
      \end{equation}
where $\chi_j (y)$ is a smooth function whose support is contained in
the interval $[1,2]$. Moreover, by construction $v$ satisfies the
initial and boundary conditions
\[
v(t_0,x,y)=v_t(t_0,x,y)=v(t,x,0) =0.
\]
The point now is that, as the right hand side of~\eqref{eqv} behaves
as $y^{2k-2}$ and vanishes for $y>2$, it is easy to see that it is in
$L^1(\RR,\La)$, so the estimate~\eqref{estimate*} ensures
that $u$ written as in \eqref{eqsum}
is the unique solution of \eqref{equ2}
satisfying the boundary condition $u|_{y=0}=f$
and vanishing initial data.
\end{proof}
\begin{remark}
Arguing as in~\cite[Proposition 5.2]{Enciso}, we could have proved higher regularity
estimates for the solution in suitable weighted spaces, but we will
not need that result. The global $L^\infty$~bound in time, on the contrary,
will be essential and is not proved in the aforementioned paper, as it does not
hold for the more general equations there considered.
\end{remark}

Given a positive real parameter $\al$ that is not a half-integer, let us write it as
$\al=\al_0+m$, where $m$ denotes its integer part and $\al_0\in (0,1)$.
The generalized Dirichlet-to-Neumann map $\Lambda_\al$ is then defined as in~\cite[Theorem 3.3]{Mar}
\begin{equation}
	\Lambda_\al f(t,x)=c_\al\lim\limits_{y\to 0^+}
        y^{2(1-\al_0)}\Big(\frac{1}{y}\, \pd_y\Big)^{m+1} u(t,x,y),\label{DNgen}
\end{equation}
with $$c_\al:=(-1)^{m+1} 2^{\al+\al_0-1}\frac{\Ga(\al)}{\Ga(1-\al_0)}$$ an
inessential normalizing factor.

The following theorem, which is the central result of this paper, is
essentially a rewording of Theorem~\ref{T.main}:

\begin{thm}\label{T.general}
  Given a positive real number $\al$ that is not an integer and
  $f\in C^\infty_0(\RR^n)$, let $u$ be the solution of the
  initial-boundary problem in the Poincar\'e-AdS space~\eqref{equ2} with
  Dirichlet datum~\eqref{Dirichletu} and initial data
  $u(-\infty,\cdot)=u_t(-\infty,\cdot)=0$.  Then the map $\Lambda_\al$
  defined in~\eqref{DNgen} is given by the $\al\th$ power of the wave
  operator:
\begin{equation}
\Lambda_\al f=\Box^\al f.
\end{equation}
\end{thm}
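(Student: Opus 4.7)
The plan is to reduce \eqref{equ2} to a one--parameter family of Bessel ODEs by Fourier transforming in $x$ and in $t$, where the transform in $t$ is regularized by the causality prescription $\tau\mapsto \tau-i\ep$ with $\ep\to 0^+$; this is precisely the prescription appearing in~\eqref{symbol2}. Setting $\la^2:=|\xi|^2-(\tau-i\ep)^2$ with the principal branch so that $\Real\la>0$, the transformed equation reads
\[
\hat u_{yy}+\frac{1-2\al}{y}\,\hat u_y-\la^2\hat u=0,
\]
whose general solution is $\hat u(y)=y^\al[A(\tau,\xi)\,K_\al(\la y)+B(\tau,\xi)\,I_\al(\la y)]$. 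The temporal $L^\infty$--in--energy bound on $u$ coming from the proof of Lemma~\ref{L.existence} rules out the exponentially growing branch at $y=\infty$, forcing $B\equiv 0$. Combining the small--argument expansion $K_\al(z)\sim\tfrac{\Ga(\al)}{2}(z/2)^{-\al}$ with the Dirichlet condition $\hat u(\tau,\xi,0)=\hat f(\tau,\xi)$ then fixes $A=2^{1-\al}\la^\al\hat f/\Ga(\al)$.

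For the Neumann side I would iterate the standard recurrence $\tfrac{1}{z}\tfrac{d}{dz}[z^\nu K_\nu(z)]=-z^{\nu-1}K_{\nu-1}(z)$ (which follows at once from $K'_\nu=-K_{\nu-1}-\nu K_\nu/z$) to deduce
\[
\bigg(\frac{1}{y}\pd_y\bigg)^{m+1}\bigl[y^\al K_\al(\la y)\bigr]=(-\la)^{m+1}\, y^{\al_0-1}K_{1-\al_0}(\la y),
\]
using $\al-m-1=\al_0-1$ together with $K_{-\nu}=K_\nu$. Since $1-\al_0\in(0,1)$, the leading behaviour $K_{1-\al_0}(z)\sim\tfrac{\Ga(1-\al_0)}{2}(z/2)^{\al_0-1}$ applies, and multiplication by $y^{2(1-\al_0)}$ yields
\[
\lim_{y\to 0^+}y^{2(1-\al_0)}\bigg(\frac{1}{y}\pd_y\bigg)^{m+1}\hat u(\tau,\xi,y)=(-1)^{m+1}\,2^{-\al_0}\Ga(1-\al_0)\,\la^{m+\al_0}A(\tau,\xi).
\]
Inserting the value of $A$ and pre--multiplying by $c_\al=(-1)^{m+1}2^{\al+\al_0-1}\Ga(\al)/\Ga(1-\al_0)$, the gamma factors and powers of $2$ collapse to give exactly $\la^{2\al}\hat f(\tau,\xi)$. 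Passing to the limit $\ep\to 0^+$ identifies $\la^{2\al}$ with the tempered distribution $\sigma_\al$ via~\eqref{symbol2}, and inverting the transform yields $\Lambda_\al f=\Box^\al f$ in view of Proposition~\ref{P.sial}.

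The hard part will be turning this Fourier--side calculation into a rigorous pointwise identity. Two issues require care: one must show that the solution furnished by Lemma~\ref{L.existence} admits a tempered Laplace--Fourier representation of the claimed form, and that the exclusion of the $I_\al$--branch genuinely follows from the global energy bound rather than from a heuristic decay requirement at $y=\infty$; and one has to justify commuting the generalized Neumann limit $y\to 0^+$ with the inverse transform. The decomposition $u=\sum_j c_j u_j+v$ provided by Lemma~\ref{L.existence} dovetails naturally with this analysis, since the polynomial pieces $y^{2j}\Box^j f$ correspond exactly to the regular terms in the Taylor--type expansion of $y^\al K_\al(\la y)$ at $y=0$, so they carry the Dirichlet datum while the generalized Neumann operator annihilates them (indeed, $\bigl(\tfrac{1}{y}\pd_y\bigr)^{m+1}y^{2j}=0$ for $0\le j\le m$); the energy remainder $v$ is regular enough not to contribute to the leading--order behaviour at $y=0$, so the Neumann limit is entirely captured by the $y^{\al_0-1}K_{1-\al_0}$--branch of $\hat u$.
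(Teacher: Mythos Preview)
Your proposal is correct and follows essentially the same route as the paper. The only cosmetic difference is that the paper uses the Laplace transform in $t$ at $s=\ep+i\tau$ (justified for $\ep>0$ by the $L^\infty$-in-time bound of Lemma~\ref{L.existence}) rather than a ``regularized Fourier transform'' with $\tau\mapsto\tau-i\ep$; since $u$ vanishes for $t<t_0$ these are the same integral, and both arguments then discard the $I_\al$ branch via the energy control, apply the Bessel recurrence $\bigl(\tfrac{1}{z}\tfrac{d}{dz}\bigr)^k[z^\nu K_\nu(z)]=(-1)^k z^{\nu-k}K_{\nu-k}(z)$, and pass to the limit $\ep\to 0^+$ to recover $\sigma_\al$.
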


\begin{proof}
To begin with, let us start by noticing that the solution with the
initial condition $u(-\infty,\cdot)=u_t(-\infty,\cdot)=0$ is well
defined, and it can be equivalently defined by the condition
$u(t_0,\cdot)=u_t(t_0,\cdot)=0$ where $t_0$ is any number such that
$f(t,x)=0$ for all $t<t_0$. In what follows, $t_0$ will denote a
number with this property.

Consider the Laplace transform of $u(t,\cdot)$,
\[
	U(s,\cdot)=\int_{t_0}^\infty e^{-s(t-t_0)}\ u(t,\cdot)\ dt,
\]
where $s=\epsilon+i\tau$ with $\epsilon>0$ and $\tau\in\mathbb{R}$.
Notice this expression converges as a vector-valued function because
$u$ is a Banach-space valued $L^\infty$~function of time by
Lemma~\ref{L.existence}. (When $\al<1$, the norm is
simply that of $\dot H^1_\al$, while for $\al\geq1$ the norm must also control the
terms $u_j$ appearing in~\eqref{eqsum}, for example by decomposing
\[
u(t,x,y)=:\chi(y)\sum_{j=0}^{k-1}y^{2j}\, \bar u_j(t,x)+ v(t,x,y)
\]
and adding the $L^\infty$ norm of the
functions $\bar u_j$ and the $L^\infty(\RR,\dot H^1_\al)$ norm of~$v$).

Furthermore, observe that one can then recover $u$ through the inverse Laplace transform formula
\[
	u(t,\cdot)=\dfrac{1}{2\pi i}\int_{\epsilon-i\infty}^{\epsilon+i\infty} e^{s(t-t_0)}\ U(s,\cdot) \ ds,
\]
where the integration contour is the vertical line of numbers whose
real part is $\epsilon>0$.

We tackle the problem of finding explicit solutions to problem \eqref{equ2}-\eqref{Dirichletu} as
follows. First, we apply the Laplace transform on the equation for~$u$ to remove the time derivatives by integration by parts and then use the trivial initial conditions in the LHS above,
\begin{align*}
	\int_{t_0}^\infty e^{-s(t-t_0)}\ {u}_{tt}(t,\cdot)\ dt
		=s^2 {U}(s,\cdot)- {u}_t(t_0,\cdot)-s {u}(t_0,\cdot)=s^2 {U}(s,\cdot)
\end{align*}
to find that $U(s,x,y)$ satisfies the equation
\begin{equation*}
	{\partial}_{yy}U(s,x,y)+\dfrac{1-2\al}{y}{\partial}_y U(s,\xi,y)+(\De_x-s^2){U}(s,x,y)=0.
\end{equation*}

Next we take the Fourier transform in space
with respect to the variable $x$, which here is denoted with a tilde
to avoid confusions with the space-time Fourier transform. This yields
the ODE
\begin{equation*}
	\partial_{yy}\widetilde{U}(s,\xi,y)
+\dfrac{1-2\al}{y}\partial_y\widetilde{U}(s,\xi,y)-(|\xi|^2+s^2)\widetilde{U}(s,\xi,y)=0.
\end{equation*}
The general solution of this equation can be written as a linear
combination of Bessel functions multiplied by a certain power of
$y$, and spans a two-dimensional vector space. However, by Lemma~\ref{L.existence} the solution $u(t,\cdot)$
is given by the sum of a function bounded
in~$\HH$ and other terms that are uniformly bounded in~$y$,
so we may discard the independent solution
that grows exponentially in~$y$ as $y\to\infty$ to arrive at the formula
\begin{equation}\label{explicit-formula}
	\widetilde{U}(s,\xi,y)=\dfrac{2^{1-\al}}{\Gamma(\al)}y^\al (|\xi|^2+s^2)^{\frac{\al}{2}} K_{\al}(y\sqrt{|\xi|^2+s^2}) \widetilde{F}(s,\xi),
\end{equation}
where $K_{\al}$ denotes the modified Bessel function of the second
kind defined in \eqref{Bessel}. The constant (which depends on $s$ and
$\xi$) has been chosen to ensure that the Dirichlet condition is
satisfied:
\[
\widetilde{U}(s,\xi,0)= \widetilde{F}(s,\xi).
\]
Here and in what follows, $F(s,x)$ denotes the Laplace transform
of~$f(t,x)$, computed as above, and the tilde denotes the Fourier transform in space.

Therefore, using the identity
\[
	\left(\frac{1}{z}\frac{d}{dz}\right)^{k}(z^{\nu}K_{\nu}(z))=(-1)^k z^{\nu-k}K_{\nu-k}(z)
\]
for modified Bessel functions, we readily infer that
the Dirichlet-to-Neumann map \eqref{DNgen} reads in the Laplace-Fourier space as
\[
	\widetilde{\Lambda_\al F}(s,\xi):=c_\al \lim\limits_{y\to 0^+} y^{2(1-\al_0)}\Big(\frac{1}{y}\pd_y\Big)^{m+1} \widetilde{U}(s,\xi,y)=(|\xi|^2+s^2)^\al \widetilde{F}(s,\xi).
\]

The key point now is that one can relate the inverse Laplace transform with the inverse Fourier transform in time using the freedom in the choice of the parameter~$\epsilon$. To show this, notice that by the Laplace inversion formula, we have that the Dirichlet-to-Neumann map in the variables $(t,\xi)$ can be written as
\begin{align*}
	\widetilde{\Lambda_\al f} (t,\xi)
	&=\dfrac{1}{2\pi i} \int_{\epsilon-i\infty}^{\epsilon+i\infty}  e^{s(t-t_0)} (|\xi|^2+s^2)^\al
	\widetilde{F}(s,\xi) \ ds \\
	&=\dfrac{1}{2\pi} \int_{-\infty}^{\infty}  \int_{t_0}^{\infty} e^{(\epsilon+i\tau)(t-t')} (|\xi|^2+(\epsilon+i\tau)^2)^\al \widetilde{f}(t',\xi) \ d\tau\ dt'.
\end{align*}
Since the latter integral does not depend on the value of $\epsilon$ and $f$ is smooth with compact support in $\{ t\geq t_0\}$, by the dominated convergence theorem one can take the limit as $\epsilon\to 0^+$ inside the integral to find that
\begin{equation*}
	\widetilde{\Lambda_\al f} (t,\xi)
	= \dfrac{1}{2\pi } \int_{-\infty}^{\infty} e^{i\tau t} \sigma_\al(\tau,\xi) \widehat{f}(\tau,\xi) \ d\tau ,
\end{equation*}
with $\sigma_\al$ as in \eqref{symbol2} and $\widehat{f}(\tau,\xi)$ denoting the Fourier transform of $ f$ with respect to both time and space variables as in Section 2. Taking now the Fourier transform with respect to the time, we obtain
\[
	\widehat{\Lambda_\al f}(\tau,\xi)= \sigma_\al(\tau,\xi) \widehat{f}(\tau,\xi),
\]
proving our claim.
\end{proof}

A consequence of the proof is an explicit formula for the spacetime energy of
the solution in terms of its boundary datum that is analogous to the
result in Euclidean signature from \cite{Caffarelli}:

\begin{corollary}
With $u$ and $f$ be as in Theorem~\ref{T.general}, the total energy of
$u$ is
\[
\int_{\RR^{n+1}_+}\big[(\partial_y u)^2 +|\nabla_x u|^2+(\partial_t
u)^2\big]\,y^{1-2\al} \,dt\,dx\,dy =
C_\al\int_{\RR^n}\si_\al(\tau,\xi)\, |\widehat f(\tau,\xi)|^2\, d\tau\, d\xi\,,
\]
with $C_\al$ a nonzero constant.
\end{corollary}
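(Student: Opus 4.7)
The plan is to reduce the spacetime energy integral to a one-dimensional problem in the extension variable by Plancherel in~$(t,x)$, and then use the radial ODE satisfied by the space-time Fourier transform of the solution together with the Dirichlet-to-Neumann identification from Theorem~\ref{T.general} to extract a boundary term at $y=0$ proportional to $\sigma_\al\,|\widehat{f}|^2$.

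First, I would apply Plancherel in the $(t,x)$ variables. Interpreting $\widehat u(\tau,\xi,y)$ as the $\epsilon\to 0^+$ limit of the Laplace--Fourier representation used in the proof of Theorem~\ref{T.general}, the left-hand side of the corollary becomes
\[
	C\int_{\RR^n}\int_0^\infty \bigl[|\partial_y \widehat u|^2 + (|\xi|^2+\tau^2)\,|\widehat u|^2\bigr]\, y^{1-2\al}\, dy\, d\tau\, d\xi,
\]
reducing the problem to evaluating the inner $y$-integral for each fixed $(\tau,\xi)$.

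Second, I would integrate by parts in $y$ against the ODE
\[
	\partial_{yy}\widehat u + \tfrac{1-2\al}{y}\partial_y \widehat u = (|\xi|^2-\tau^2)\,\widehat u,
\]
obtained by Fourier-transforming~\eqref{equ2}. The pointwise identity
\[
	\partial_y\bigl(y^{1-2\al}\,\overline{\widehat u}\,\partial_y \widehat u\bigr) = y^{1-2\al}|\partial_y \widehat u|^2 + \overline{\widehat u}\, y^{1-2\al}\bigl(\partial_{yy}\widehat u + \tfrac{1-2\al}{y}\partial_y \widehat u\bigr),
\]
together with the ODE, collapses the bulk contributions into the boundary datum $\bigl[y^{1-2\al}\overline{\widehat u}\,\partial_y \widehat u\bigr]_0^\infty$ plus a term proportional to $\int_0^\infty|\widehat u|^2\, y^{1-2\al}\,dy$. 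The latter can be computed in closed form by inserting the explicit Bessel representation~\eqref{explicit-formula} and invoking the standard integral $\int_0^\infty u\, K_\al(u)^2\, du$, which again yields a multiple of $\sigma_\al(\tau,\xi)|\widehat f(\tau,\xi)|^2$.

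Third, I would evaluate the boundary contributions. At $y=\infty$ the exponential decay of $K_\al$ kills the endpoint; at $y=0^+$, Theorem~\ref{T.general} gives $\widehat u(\tau,\xi,0)=\widehat f(\tau,\xi)$ and $y^{1-2\al}\partial_y \widehat u\to c_\al^{-1}\sigma_\al(\tau,\xi)\widehat f(\tau,\xi)$, so the boundary term is a nonzero multiple of $\sigma_\al(\tau,\xi)|\widehat f(\tau,\xi)|^2$. Assembling the pieces and integrating in $(\tau,\xi)$ gives the identity with an explicit constant $C_\al$.

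The main obstacle is the passage to the $\epsilon\to 0^+$ limit in the timelike region $|\xi|^2<\tau^2$, where $\widehat u$ oscillates in $y$ rather than decaying and the integrals above are only conditionally meaningful. I would therefore carry out the integration by parts and the $y$-integrations at fixed $\epsilon>0$, where $\sqrt{|\xi|^2+(\epsilon+i\tau)^2}$ has positive real part and the $K_\al$-based solution is honestly integrable against $y^{1-2\al}\,dy$, and only afterwards invoke the contour argument of Theorem~\ref{T.general} to let $\epsilon\to 0^+$. This ensures that the boundary symbol appears with the correct branch prescription from~\eqref{symbol2}.
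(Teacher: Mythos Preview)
Your approach diverges from the paper's and contains a genuine gap. The paper does \emph{not} integrate by parts in~$y$: it keeps the $\epsilon$-regularized Laplace--Fourier transform, plugs the explicit formula~\eqref{explicit-formula} directly into the energy integrand, and then rescales $y\mapsto y/\sqrt{|\xi|^2+s^2}$ so that the $y$-integral becomes a pure constant and the factor $(|\xi|^2+s^2)^\al$ drops out by homogeneity. No boundary term is ever isolated.

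Your integration-by-parts route runs into a sign mismatch that you do not resolve correctly. The ODE has coefficient $|\xi|^2-\tau^2$ (Lorentzian), while the energy density carries $|\xi|^2+\tau^2$ (positive definite). After your identity the inner integral becomes
\[
-\lim_{y\to0^+}y^{1-2\al}\,\overline{\widehat u}\,\partial_y\widehat u \;+\; 2\tau^2\int_0^\infty |\widehat u|^2\,y^{1-2\al}\,dy,
\]
and the second term does \emph{not} evaluate to a multiple of $\sigma_\al(\tau,\xi)\,|\widehat f|^2$. Inserting~\eqref{explicit-formula} and rescaling gives
\[
\int_0^\infty |\widehat u|^2\,y^{1-2\al}\,dy \;=\; C\,(|\xi|^2-\tau^2)^{\al-1}\,|\widehat f|^2,
\]
i.e.\ a multiple of $\sigma_{\al-1}|\widehat f|^2$, so the leftover bulk contribution is $2C\tau^2(|\xi|^2-\tau^2)^{\al-1}|\widehat f|^2$, which cannot combine with the boundary term $c_\al^{-1}\sigma_\al|\widehat f|^2$ to produce a pure multiple of $\sigma_\al|\widehat f|^2$. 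In other words, your step~2 asserts the wrong homogeneity for the residual integral, and the argument does not close. If you want to keep an integration-by-parts strategy, you must work with the Lorentzian bilinear form $|\partial_y\widehat u|^2+(|\xi|^2-\tau^2)|\widehat u|^2$ throughout (which is what the paper effectively does via the $s^2$ term before $\epsilon\to0^+$), not with the positive-definite combination.
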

\begin{proof}
We first note that the Laplace transform of the function $u(s,\cdot)$ at $s=\epsilon+i\tau$,
$$U(\epsilon+i\tau)=\int_0^\infty e^{-i\tau t} e^{-\epsilon t} u(t,\cdot)\,dt,$$
is the Fourier transform of the function $e^{-\epsilon t} u(t,\cdot)$. Thus by Plancherel theorem we can write
\begin{equation}\label{Plancherel}
\int_{\epsilon-i\infty}^{\epsilon+i\infty} |U(s,\cdot)|^2\,ds=\int_{-\infty}^{+\infty} |U(\epsilon+i\tau,\cdot)|^2\,d\tau=\int_0^\infty e^{-2\epsilon t}|u(t,\cdot)|^2\,dt.
\end{equation}

Now we consider the energy for equation \eqref{equ2}, given by
\begin{equation*}\begin{split}
E&=\int_{0}^\infty y^{1-2\al} \int_{\mathbb R^{n-1}}\int_0^\infty \big[(\partial_y u)^2 +|\nabla_x u|^2+(\partial_t u)^2\big]\,dt\,dx\,dy\\
&=\lim_{\epsilon\to 0} \int_{0}^\infty y^{1-2\al} \int_{\mathbb R^{n-1}}\int_0^\infty \big[(\partial_y u)^2 +|\nabla_x u|^2+(\partial_t u)^2\big]e^{-2\epsilon t}\,dt\,dx\,dy,
\end{split}
\end{equation*}
which, using Plancherel identity \eqref{Plancherel} for the Laplace transform, becomes
\begin{multline*}
E=\lim_{\epsilon\to 0} \int_{0}^\infty y^{1-2\al} \int_{\mathbb R^{n-1}}\int_{\epsilon-i\infty}^{\epsilon+i\infty} \big[|\partial_y U(s,x,y)|^2 +|\nabla_x U(s,x,y)|^2\\+s^2|U(s,x,y)|^2\big]\,ds\,dx\,dy.
\end{multline*}
Now we take Fourier transform in the variable $x$, yielding
\begin{multline*}
E=\lim_{\epsilon\to 0} \int_{0}^\infty y^{1-2\al} \int_{\mathbb R^{n-1}}\int_{\epsilon-i\infty}^{\epsilon+i\infty} \big[|\partial_y \widetilde U(s,\xi,y)|^2 +(|\xi|^2+s^2)|\widetilde U(s,\xi,y)|^2\big]\,ds\,d\xi\,dy.
\end{multline*}
Substituting the explicit expression \eqref{explicit-formula} we arrive at
\begin{multline*}
E=\lim_{\epsilon\to 0} \int_{0}^\infty y^{1-2\al} \int_{\mathbb R^{n-1}}\int_{\epsilon-i\infty}^{\epsilon+i\infty} (|\xi|^2+s^2)\big[|K'_1(y\sqrt{|\xi|^2+s^2})|^2\\+|K_1(y\sqrt{|\xi|^2+s^2})|^2 \big]|\widetilde F(s,\xi)|^2\,ds\,d\xi\,dy,
\end{multline*}
where, for simplicity, we have set $K_1(y):=\dfrac{2^{1-\al}}{\Gamma(\al)} y^\al  K_{\al}(y)$.
A change of variable allows us to integrate in~$y$, obtaining
\begin{equation*}
E=\lim_{\epsilon\to 0} C_\alpha\int_{\mathbb R^{n-1}}\int_{\epsilon-i\infty}^{\epsilon+i\infty} (|\xi|^2+s^2)^\alpha |\widetilde F(s,\xi)|^2\,ds\,d\xi
\end{equation*}
with $C_\al$ an explicit constant,
as claimed.
\end{proof}



\section{Application to other asymptotically AdS metrics}\label{section:geometry}

Our point in this section is to show that the identities established in Theorem~\ref{T.main} remain valid in
considerably more general situations. We will illustrate this fact by
connecting other fractional wave operators with the
Dirichlet-to-Neumann map (or, more generally, the scattering operator) of
two simple classes of static asymptotically AdS manifolds:

\subsubsection*{Fractional waves in product spaces}

Consider a compact Riemannian manifold $\mathcal M$ of dimension $n-1$
endowed with a Riemannian metric $g_0$ and take the natural wave
operator on $\RR\times\mathcal M$, which is
\[
	\Box_0:=\partial_{tt}-\Delta_{g_0},
\]
where $\Delta_{g_0}$ stands for the Laplace-Beltrami operator on
$\mathcal{M}$.

Since $\mathcal{M}$ is compact, we can take an
orthonormal basis $\{Y_j\}_{j\in\mathbb{N}}$ of eigenfunctions of the
Laplacian $\Delta_{g_0}$, which satisfy
\[
-\Delta_{g_0} Y_j=\lambda_j^2 Y_j\,,
\]
and write any $L^2$ function $f$ on $M$ (depending on~$t$ as a parameter) as the $L^2$-convergent series
$f(t,\cdot)=\sum_j f_j(t) Y_j(\cdot)$. Let us denote by $(\Box_0 f)_j
(t)$ the $j\th$ component of the function $\Box_0 f$ in this basis. Taking the Fourier transform with respect to $t$ we obtain that
\[
\widehat{\Box_0 f_j}(\tau)=(\lambda_j^2-\tau^2) \widehat{f}_j(\tau)
\]
and, given a real parameter $\al$, we can define here the $\al\th$ power of the wave operator as the pseudo-differential operator that in the Fourier space reads as
\begin{equation}
	\widehat{\Box_0^\al f_j}{(\tau)}:=\sigma_\al(\tau,\lambda_j) \widehat{f}_j(\tau),
\end{equation}
with $\sigma_\al$ the function defined in \eqref{symbol2}.

Consider now an
$(n+1)$-dimensional Lorentzian spacetime with the metric
\begin{equation}
g^+:=\frac{dt^2-dy^2-g_0}{y^2},\label{metric2}
\end{equation}
where $t\in\RR$ is the time coordinate and $y\in\mathbb{R}_+$
is a spatial coordinate.
Comparing with the metric defined in \eqref{metric1}, it is clear that the
Klein-Gordon equation with parameter $\mu:=(\alpha^2-n^2/4)$
associated to the metric \eqref{metric2} then takes the form
\[
\partial_{tt}\phi-\De_{g_0}\phi-\partial_{yy} \phi- \frac{1-n}{y}\partial_y\phi +\frac{4\al^2-n^2}{4y^2}\phi=0\,,
\]
where one must prescribe some suitable initial-boundary conditions for
the scalar field~$\phi$.
As in the last section, the above equation can be rewritten in terms of the rescaled function
$u:=y^{\al-\frac{n}{2}} \phi$ as
\begin{equation}
\partial_{tt} u=\Delta_{g_0} u+\frac{1-2\al}{y}\partial_y u+\partial_{yy} u,\label{wave2}
\end{equation}
where we take trivial initial data at time $-\infty$ and prescribe the
boundary condition at timelike conformal infinity: $u|_{y=0}=f$.

We can next define the (generalized) Dirichlet-to-Neumann map through its coefficients
\begin{equation}
(\Lambda_\al f)_j=c_\al \lim\limits_{y\to 0^+} y^{2(1-\al_0)}\Big(\dfrac{1}{y}\pd_y\Big)^{m+1} u_j,\label{DNgen2}
\end{equation}
with $c_\al$ as before, $\al=\al_0+m$, $m=\lfloor\al\rfloor$ the
integer part of~$\al$  and $\al_0\in (0,1)$.

By means of expansion in eigenfunctions of the Laplacian $\De_{g_0}$
and the Laplace transform in time (together with the vanishing initial conditions),
the equation \eqref{wave2} can be transformed into our well known ordinary equation
\[
	 \partial_{yy}{U}_j(s,y)+\dfrac{1-2\al}{y}\partial_y{U}_j(s,y)-(\lambda_j^2+s^2){U}_j(s,y)=0,
\]
with boundary condition $F_j(s)=U_j (s,0)$,
where $$U_j(s,y):=\int_{t_0}^\infty e^{-s(t-t_0)}\,u_j(s,y)\,dt\,$$
is the Laplace transform of the coefficient $u_j$
with $s=\epsilon+i\tau$ and $\ep$ a fixed positive constant.
Notice that $U_j$ can be shown to be well defined for $\epsilon>0$ by
an $L^\infty$~bound in time that goes exactly as in Lemma~\ref{L.existence}.

Arguing just as in the previous section,
we find that the Dirichlet-to-Neumann map in the transformed space reads as
\[
	\Lambda_\al F_j:= c_\al \lim\limits_{y\to 0^+} y^{2(1-\al_0)}\Big(\dfrac{1}{y}\pd_y\Big)^{m+1} {U}_j(s,y)=(\lambda_j^2+s^2)^\al {F}_j(s),
\]
and therefore, by the inverse Laplace transform formula
and the Fourier transform in time,
\[
	\widehat{\Lambda f}_j(\tau)=\sigma_\al(\tau,\lambda_j) \widehat{f}_j(\tau).
\]
Thereby, we can identify the Dirichlet-to-Neumann map
in the Lorenztian space with metric \eqref{metric2}
with the powers of the wave operator $\Box_0$.

\subsubsection*{The global anti-de Sitter space}

Consider now the global anti-de Sitter mentioned in the introduction,
which is diffeomorphic to $\RR^{n+1}$ and one can describe through spherical coordinates
\[
(t,r,\theta)\in \RR\times\RR_+\times\mathbb{S}^{n-2}\,,
\]
which cover the whole manifold modulo the usual abuse of notation at
the origin. In these coordinates the metric of $\mathrm{AdS}_{n+1}$ reads as
\begin{equation}
	g^+:=(1+r^2)dt^2-\dfrac{1}{1+r^2}dr^2 -r^2 g_{\mathbb{S}^{n-2}}\,, \label{metricAdS}
\end{equation}
where $g_{\mathbb{S}^{n-2}}$ is the canonical metric on the unit
$(n-2)$-dimensional sphere, associated with the coordinate~$\te$.

In this space we can picture the spatial limit $r\to+\infty$
as the cylinder $\RR_t\times \mathbb{S}^{n-2}$ with the standard
metric
\[
g_0:=dt^2-g_{\mathbb{S}^{n-2}}\,,
\]
which defines the timelike conformal infinity of the spacetime.

Let us now focus on the Klein--Gordon equation on this anti-de Sitter space
with the natural Dirichlet datum
\[
	\lim_{r\to\infty}r^{\frac n2-\al}\phi(t,r,\theta)=f(t,\theta),
\]
$f\in C^\infty_0(\RR^{n+1})$ and trivial initial conditions at time $-\infty$.
Upon expanding the operator $\Box_{g^+}$
associated to the metric \eqref{metricAdS},
we obtain the Klein--Gordon equation
\begin{align}
\begin{split}
\partial_{tt}\phi=\dfrac{1+r^2}{r^2}\De_\theta \phi&+(1+r^2)^2\partial_{rr} \phi\\
&+(1+r^2)\Big(\dfrac{n-1}{r}+(n+1) r \Big) \partial_r\phi -(1+r^2)\Big(\al^2-\dfrac{n^2}{4} \Big)\phi\,.
\end{split}
\end{align}

As before, in order obtain the scattering operator we take
a basis $\{\mathcal{Y}_j\}_{j\in\mathbb{N}}$
of spherical harmonics of energy $\lambda_j^2:=j(j+n-3)$. They satisfy
the equation
\[
-\Delta_{\theta} \mathcal{Y}_j=\lambda_j^2 \mathcal{Y}_j,
\]
with $\Delta_{\theta}$ the Laplace-Beltrami operator on $\mathbb{S}^{n-2}$.
Introducing the coefficients
\[
\phi_j(t,r):=\int_{\mathbb S^{n-2}}\phi(t,r,\te)\,\mathcal{Y}_j(\te)\, d\te\,,
\]
one can apply the Laplace transform and expand the Klein--Gordon
equation in the basis of spherical harmonics
to obtain an ODE in the variable $r$,
\begin{equation}
(1+r^2) \pd_{rr}\Phi_{j}+\Big(\frac{n-1}{r}+(n+1) r\Big)\pd_{r} \Phi_j+
\Big(\frac{s^2}{1+r^2}+\frac{\lambda_j ^2}{r^2}+\al ^2-\frac{n^2}{4}\Big)\Phi_j=0\,,
\end{equation}
where $$\Phi_j(s,r):=\int_{t_0}^\infty e^{-s(t-t_0)}\ \phi_j(t,r)\
dt$$
is the Laplace transform of $\phi_j(r,\te)$.

The explicit solution of this equation is a combination of certain powers of $r$
multiplied by ordinary hypergeometric functions. Discarding the
solution that is not locally in $H^1$ at the origin, $r=0$, we then obtain
\[
\Phi_j(s,r)=c_j(s)r^{\beta} e^{-\tfrac{i}{2} \log(1+r^2)^s}{}_2 F_1\Big( \tfrac{1}{2} (\be-is+\tfrac{n}{2}-\al),
\tfrac{1}{2} (\be-is+\tfrac{n}{2}+\al), \beta+\tfrac{n}{2},-r^2\Big),
\]
where  $$ \beta:=\frac{1}{2} \left(2-n+\sqrt{4 \lambda
    ^2+(n-2)^2}\right).$$
The coefficient $c_j(s)$ is readily computed using that
$F_j(s):=	\lim_{r\to\infty}r^{\frac n2-\al}\Phi_j(s,r)$
must be the Laplace transform of the $j\th$ component of the boundary
datum.

The transformed Dirichlet-to-Neumann operator is then readily shown to
be
\[
\Lambda_\al F_j(s):=\lim_{r\to\infty}r^{1+2\al}\pd_r (r^{\frac n2-\al}\Phi_j(s,r)).
\]
Using now the explicit solution, we obtain
that
\begin{align*}
\Lambda_\al F_j(s)=\dfrac{\Gamma (-\alpha )\Gamma\big(\frac{1}{2} (\be-is+\frac{n}{2}+\al)\big)
\Gamma \big(\frac{1}{2} (\be+is+\frac{n}{2}+\al)\big) }
{\Gamma (\alpha )\Gamma\big(\frac{1}{2} (\be-is+\frac{n}{2}-\al)\big)
\Gamma\big(\frac{1}{2} (\be+is+\frac{n}{2}-\al)\big)}
(\be-i s +\tfrac{n}{2}-\al)F_j(s) \label{DNads},
\end{align*}
which can be written using the Fourier transform in time
as
\[
	\widehat{\Lambda f}_j(\tau):=\lim\limits_{\ep\to 0^+}\Lambda_\al F_j(\ep+i\tau).
\]
It should be noticed that in the limit
of large frequencies of the multiplier $\sigma_\al(\tau,\lambda_j)$ is, up to some numerical factor, the principal symbol
of the scattering operator in this globally defined AdS space, as one
would have expected.

\section*{Acknowledgements}

A.E.\ and B.V.\ are supported by the ERC Starting Grant~633152 and by the
ICMAT--Severo Ochoa grant
SEV-2015-0554. M.d.M.G.\ is supported by Spanish national
project MTM2014-52402- C3-1-P and is part of the the Catalonian research group 2014SGR1083 and the Barcelona Graduate School of Math.

 \end{document}